\documentclass[12pt]{amsart}
\usepackage{fullpage}
\usepackage{amsmath}
\usepackage{amsthm}
\usepackage{amssymb}
\usepackage{stmaryrd}

\newtheorem{thm}{Theorem}[section]

\theoremstyle{definition}

\newtheorem*{ex}{Example}
\newtheorem*{rmk}{Remark}

\newcommand{\lbrac}{\llbracket}
\newcommand{\rbrac}{\rrbracket}
\newcommand{\CC}{\mathbb C}
\newcommand{\ZZ}{\mathbb Z}
\newcommand{\RR}{\mathbb R}
\newcommand{\QQ}{\mathbb Q}
\newcommand{\Vol}{\operatorname{Vol}}
\newcommand{\brac}[2]{\genfrac{\langle}{\rangle}{0pt}{}{#1}{#2}}

\title{Laurent polynomials, Eulerian numbers, and Bernstein's theorem}
\author{Ricky Ini Liu}
\address{Department of Mathematics, University of Michigan, 
Ann Arbor, MI 48109}
\email{riliu@umich.edu}

\begin{document}
\begin{abstract}
Erman, Smith, and V\'arilly-Alvarado \cite{ESV} showed that the expected number of doubly monic Laurent polynomials $f(z) = z^{-m} + a_{-m+1}z^{-m+1} + \cdots + a_{n-1}z^{n-1} + z^n$ whose first $m+n-1$ powers have vanishing constant term is the Eulerian number $\brac{m+n-1}{m-1}$, as well as a more refined result about sparse Laurent polynomials. We give an alternate proof of these results using Bernstein's theorem that clarifies the connection between these objects. In the process, we show that a refinement of Eulerian numbers gives a combinatorial interpretation for volumes of certain rational hyperplane sections of the hypercube.
\end{abstract}

\maketitle

\section{Introduction}

Fix positive integers $m$ and $n$, and let $f(z) = z^{-m} + a_{-m+1}z^{-m+1} + \cdots + a_{n-1}z^{n-1} + z^n$ be a doubly monic Laurent polynomial (with complex coefficients). Denote by $\lbrac f^k \rbrac$, for all positive integers $k$, the constant coefficient of the $k$th power of $f(z)$. Then, motivated by a result of Duistermaat and van der Kallen \cite{DK} and a related question by Sturmfels \cite{Sturmfels}, Erman, Smith, and V\'arilly-Alvarado \cite{ESV} proved the following theorem. 

\begin{thm}[\cite{ESV}]\label{thm-main}
The expected number of Laurent polynomials $f(z)$ such that $\lbrac f^1\rbrac = \lbrac f^2\rbrac = \cdots = \lbrac f^{m+n-1}\rbrac=0$ is the Eulerian number $\brac{m+n-1}{m-1}$.
\end{thm}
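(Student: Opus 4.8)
The plan is to parametrize the doubly monic $f$ by the roots of an associated polynomial and then apply Bernstein's theorem. Set $h(z)=z^{m}f(z)=1+a_{-m+1}z+\cdots+a_{n-1}z^{m+n-1}+z^{m+n}$; a doubly monic $f$ is precisely a monic polynomial $h$ of degree $m+n$ with $h(0)=1$, so write $h(z)=\prod_{j=1}^{m+n}(1-\beta_{j}z)$. The condition that the leading coefficient of $h$ be $1$ is $\prod_{j}\beta_{j}=(-1)^{m+n}$; in particular every $\beta_{j}$ is nonzero, and $\beta\mapsto f$ is a generically $(m+n)!$-to-one map from $\{\beta\in(\CC^{*})^{m+n}:\prod_{j}\beta_{j}=(-1)^{m+n}\}$ onto the set of doubly monic $f$, the $\beta_{j}$ being the roots of $h$ in some order. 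Since $\lbrac f^{k}\rbrac=[z^{mk}]h(z)^{k}=[z^{mk}]\prod_{j}(1-\beta_{j}z)^{k}$, expanding the product gives
\[
\lbrac f^{k}\rbrac=\sum_{\substack{0\le d_{j}\le k\\ d_{1}+\cdots+d_{m+n}=mk}}\Bigl(\prod_{j}\binom{k}{d_{j}}(-1)^{d_{j}}\Bigr)\,\beta_{1}^{d_{1}}\cdots\beta_{m+n}^{d_{m+n}},
\]
a polynomial in $\beta$ none of whose coefficients vanishes.

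Hence the Newton polytope of $\lbrac f^{k}\rbrac$ is the convex hull of $\{d\in\ZZ^{m+n}:0\le d_{j}\le k,\ \sum_{j}d_{j}=mk\}$, which is $k$ times the hypersimplex $\Delta:=\{x\in[0,1]^{m+n}:\sum_{j}x_{j}=m\}$ --- a hyperplane section of the unit cube. As $\Delta$ is a lattice polytope, $k\Delta$ really is this Newton polytope. The extra equation $\prod_{j}\beta_{j}=(-1)^{m+n}$ has Newton polytope the segment $[\mathbf 0,\mathbf 1]$. We now have a system of $m+n$ Laurent polynomial equations in the $m+n$ variables $\beta_{j}$, namely $\lbrac f^{k}\rbrac=0$ for $k=1,\dots,m+n-1$ together with $\prod_{j}\beta_{j}=(-1)^{m+n}$, and every solution lies in $(\CC^{*})^{m+n}$ since $\prod_{j}\beta_{j}\ne0$. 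By Bernstein's theorem the expected number of solutions is the mixed volume $\operatorname{MV}\bigl(\Delta,2\Delta,\dots,(m+n-1)\Delta,\,[\mathbf 0,\mathbf 1]\bigr)$.

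Homogeneity of the mixed volume in each argument pulls out the dilation factors: this equals $(m+n-1)!\cdot\operatorname{MV}(\Delta,\dots,\Delta,[\mathbf 0,\mathbf 1])$ with $m+n-1$ copies of $\Delta$. Since the $\Delta$'s lie in the hyperplane $\{\sum_{j}x_{j}=m\}$ and the linear form $\sum_{j}x_{j}$ takes value $m+n$ on the endpoint $\mathbf 1$ of the segment, the standard evaluation of a mixed volume against a transverse segment gives $\operatorname{MV}(\Delta,\dots,\Delta,[\mathbf 0,\mathbf 1])=(m+n)\cdot\operatorname{nvol}(\Delta)$, where $\operatorname{nvol}(\Delta)$ is the $(m+n-1)$-dimensional volume of the hypersimplex normalized by the lattice of its hyperplane. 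This normalized volume of a hyperplane section of the cube is precisely what the classical hypercube-slicing identity evaluates: $\operatorname{nvol}(\Delta)=\brac{m+n-1}{m-1}$. Therefore the expected number of solutions $\beta$ is $(m+n-1)!\,(m+n)\,\brac{m+n-1}{m-1}=(m+n)!\,\brac{m+n-1}{m-1}$, and dividing by the $(m+n)!$ orderings of the roots of $h$ leaves $\brac{m+n-1}{m-1}$ doubly monic $f$, as claimed.

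The two places needing care are exactly where "expected" does the work. First, and this is the main technical obstacle, one must check that Bernstein's bound is attained for this very non-generic system --- equivalently, that its facial subsystems have no solutions in the torus --- so that the count really equals the mixed volume. Second, one must justify dividing by $(m+n)!$, i.e.\ that a generic solution $h$ has distinct roots and so lifts to a full $S_{m+n}$-orbit of $\beta$'s. The combinatorial core is the volume identity $\operatorname{nvol}(\Delta)=\brac{m+n-1}{m-1}$ for the hypersimplex. To recover ESV's refined statement about sparse $f$, one runs the same argument with $h$ constrained to be sparse: this forces certain elementary symmetric functions of $\beta$ to vanish, their Newton polytopes are themselves hypersimplices, and the resulting mixed volume expands into volumes of rational hyperplane sections of the cube --- the promised refinement of the Eulerian numbers.
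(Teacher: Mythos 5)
Your proposal is correct and takes essentially the same route as the paper: parametrize $f$ by the roots of $z^{m}f(z)$ subject to the product condition, identify the Newton polytope of $\lbrac f^{k}\rbrac$ as the $k$-th dilate of the hypersimplex $\Delta_{m,m+n}$, apply Bernstein's theorem, and evaluate the mixed volume against the segment $[\mathbf 0,\mathbf 1]$ via the lattice-index factor $m+n$ and Stanley's formula $\operatorname{nvol}(\Delta_{m,m+n})=\brac{m+n-1}{m-1}$. The only differences are cosmetic (a sign convention on the roots and whether multilinearity of the mixed volume is invoked before or during the volume computation), and the caveats you flag about genericity and distinctness of roots are exactly the ones the paper absorbs into the meaning of ``expected number.''
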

Recall that the Eulerian number $\brac{n}{k}$ is the number of permutations in $S_n$ with exactly $k$ descents. (In \cite{ESV}, the ``expected number'' is computed as the degree of the ideal \[I_{m,n} = \langle \lbrac f^1\rbrac, \lbrac f^2\rbrac, \cdots, \lbrac f^{m+n-1}\rbrac\rangle\] in $\CC[a_{-m+1}, \cdots, a_{n-1}]$, assuming this ideal is zero-dimensional.) 

Their proof of this theorem involves computing the Chow ring of an appropriate toric variety and then showing that the desired answer satisfies a certain recurrence that is also satisfied by the Eulerian numbers. As a result, the appearance of the Eulerian numbers using this approach is somewhat mysterious.

In this paper, we will attempt to elucidate this connection by giving an alternate proof of this result. We will use Bernstein's Theorem (also known as the BKK Theorem after Bernstein \cite{Bernstein}, Kushnirenko \cite{Kushnirenko}, and Khovanskii \cite{Khovanskii}) to relate the number of solutions of an appropriate polynomial system to the normalized volume of a hypersimplex, which is known to be enumerated by an Eulerian number in a natural way (see \cite{Stanley}). We will also, again following \cite{ESV}, apply a similar technique to show the analogous result for ``sparse'' Laurent polynomials. In the process, we will show that the volume of the intersection of the unit hypercube $[0,1]^n$ with a hyperplane of the form $\sum_{i=1}^n x_i = \frac cd$ for $\frac cd \in \QQ$ can be interpreted combinatorially in terms of a refinement of the Eulerian numbers. (This refinement is a special case of the one given by the cyclic sieving phenomenon described in \cite{SSW}.)

\section{Preliminaries}
In this section, we review the two main ingredients needed for our proof, namely Bernstein's theorem on counting solutions to generic polynomial systems, and the relationship between Eulerian numbers and hypersimplices.

\subsection{Bernstein's theorem}

Given a Laurent polynomial $P(x_1, \dots, x_n) = \sum c_a x^a$ (where $x^a = x_1^{a_1}x_2^{a_2} \cdots x_n^{a_n}$), its \emph{support} is the set of vectors $a = (a_1, \dots, a_n) \in \ZZ^n$ such that $c_a$ is nonzero. The \emph{Newton polytope} $\Delta_P \subset \RR^n$ of $P$ is the convex hull of the support of $P$. Bernstein's theorem gives a bound on the number of solutions to a polynomial system $P_1 = P_2 = \dots = P_n = 0$ that lie in the algebraic torus $(\CC^\times)^n$ in terms of the Newton polytopes $\Delta_{P_1}, \dots, \Delta_{P_n}$.

More precisely, given polytopes $\Delta_1, \dots, \Delta_n \subset \RR^n$, the volume of the polytope $t_1\Delta_1 + \dots + t_n \Delta_n$ (where $+$ denotes Minkowski sum) is a polynomial in $t_1, \dots, t_n$. Then the \emph{mixed volume} $MV(\Delta_1, \dots, \Delta_n)$ is the coefficient of $t_1t_2\cdots t_n$ in this polynomial. Bernstein's theorem can then be stated as follows.

\begin{thm}[\cite{Bernstein}] \label{thm-bernstein}
Let $P_1, \dots, P_n \in \CC[x_1, \dots, x_n]$ with Newton polytopes $\Delta_1, \dots, \Delta_n$. Then the polynomial system $P_1=P_2=\dots=P_n=0$ has at most $MV(\Delta_1,\dots,\Delta_n)$ isolated solutions in $(\CC^\times)^n$. Moreover, if $P_1, \dots, P_n$ have generic coefficients given $\Delta_1, \dots, \Delta_n$, then this bound is exact.
\end{thm}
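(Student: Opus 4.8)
The plan is to prove both halves of Theorem~\ref{thm-bernstein} by passing from the torus $(\CC^\times)^n$ to a complete toric compactification and translating the count of solutions into a toric intersection number. First I would choose a complete fan $\Sigma$ in $\RR^n$ that simultaneously refines the inner normal fans of all the Newton polytopes $\Delta_1,\dots,\Delta_n$; after a further subdivision I may assume $\Sigma$ is smooth, so that $X = X_\Sigma$ is a smooth complete toric variety containing $(\CC^\times)^n$ as its dense orbit. Each $\Delta_i$, after a harmless integral translation (which changes neither the mixed volume nor the number of torus solutions), determines a globally generated $T$-Cartier divisor $D_i$ on $X$ whose associated polytope is $\Delta_i$, and $P_i$ becomes a global section of $\mathcal{O}_X(D_i)$ whose zero divisor restricts on the torus to $\{P_i = 0\}$.

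For the upper bound, since the $D_i$ are nef, Bézout's theorem on $X$ says that every zero-dimensional component of $\bigcap_i \{P_i = 0\}$ contributes with positive multiplicity to the intersection number $D_1 \cdots D_n$, and components meeting the boundary $X \setminus (\CC^\times)^n$ contribute nonnegatively; hence the number of isolated torus solutions is at most $D_1 \cdots D_n$. The toric dictionary then identifies this number with the mixed volume: expanding the polynomial identity $\deg\!\bigl(\textstyle\sum_i t_i D_i\bigr)^n = n!\,\Vol\!\bigl(\textstyle\sum_i t_i \Delta_i\bigr)$ and comparing the coefficients of $t_1 t_2 \cdots t_n$ on both sides gives $D_1 \cdots D_n = MV(\Delta_1,\dots,\Delta_n)$, which is the asserted bound.

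For exactness, I would argue that for $P_1,\dots,P_n$ with generic coefficients (for fixed supports), two things hold: (i) Kleiman's transversality theorem applied to the base-point-free linear systems $|D_i|$ forces $\bigcap_i \{P_i = 0\}$ to be a transverse — in particular reduced and zero-dimensional — intersection, so its length equals $D_1 \cdots D_n$; and (ii) a dimension count orbit by orbit shows that generically no solution lies on the boundary. For (ii), on each torus orbit $O_\sigma \subset X$ of dimension $d < n$ the restriction of $P_i$ is a nonzero Laurent polynomial whose Newton polytope is the face of $\Delta_i$ selected by $\sigma$, which spans an affine subspace of dimension at most $d$; imposing $n$ generic such conditions on the $d$-dimensional $O_\sigma$ has empty common zero locus, and summing over the finitely many orbits shows the whole solution set lies in $(\CC^\times)^n$. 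Combining (i) and (ii), all $MV(\Delta_1,\dots,\Delta_n)$ intersection points are simple torus solutions, so the bound is attained.

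The main obstacle is step (ii): ruling out solutions at infinity and checking transversality not merely on the dense orbit but on every boundary stratum requires the face combinatorics of the $\Delta_i$ together with a form of Kleiman--Bertini valid on the (possibly singular, positive-dimensional) orbit closures, plus the bookkeeping needed to confirm that replacing $\Sigma$ by a smooth refinement changes neither $D_1 \cdots D_n$ nor the torus solution count. An alternative route, closer to Bernstein's original argument, is a deformation proof: choose generic lifts of the supports to obtain a fine mixed subdivision of $\Delta_1 + \cdots + \Delta_n$, note that each mixed cell contributes exactly its normalized volume in torus solutions to an associated binomial subsystem, sum these to get $MV$, and then run a polyhedral homotopy back to the original system; there the crux becomes the properness of the homotopy — that no solution path escapes to the boundary of the torus — which again reduces to a genericity statement about the lifting function.
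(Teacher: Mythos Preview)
The paper does not prove Theorem~\ref{thm-bernstein}; it is quoted as a known result from \cite{Bernstein}, and the only remark offered is that it ``can be proved, for instance, using toric geometry or homotopy continuation methods.'' Your proposal is precisely a sketch of those two standard approaches --- the toric compactification/Kleiman--Bertini argument as the main line, and the polyhedral homotopy as the alternative --- so there is nothing in the paper to compare it against beyond noting that you have correctly identified and outlined both of the proof strategies the paper alludes to.
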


Bernstein's theorem can be proved, for instance, using toric geometry or homotopy continuation methods.

\subsection{Hypersimplices}

Let $k$ and $n$ be positive integers with $0<k<n$. The \emph{hypersimplex} $\Delta_{k,n} \subset \RR^n$ is the polytope
\[\Delta_{k,n} = \{(x_1, \dots, x_n) \mid x_1+x_2+\cdots+x_n = k, \;\; x_i \in [0,1]\}.\] 
In other words, $\Delta_{k,n}$ is a slice of the hypercube $[0,1]^n$ by the hyperplane $\sum x_i=k$. Note that $\Delta_{k,n}$ has dimension $n-1$, and $\Delta_{1, n}$ is the standard $(n-1)$-simplex. Normalizing so that the $(n-1)$-dimensional volume of $\Delta_{1, n}$ is 1, we have the following theorem of Stanley.
\begin{thm}[\cite{Stanley}]\label{thm-stanley}
The normalized volume of $\Delta_{k,n}$ is the Eulerian number $\brac{n-1}{k-1}$.
\end{thm}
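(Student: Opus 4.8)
The plan is to compute the normalized volume of $\Delta_{k,n}$ by decomposing the hypercube $[0,1]^n$ into unit simplices indexed by permutations, and counting how the slicing hyperplane $\sum x_i = k$ distributes these pieces. First I would recall the standard triangulation of the cube: for each permutation $w \in S_n$, let $\sigma_w = \{x \in [0,1]^n \mid x_{w(1)} \ge x_{w(2)} \ge \cdots \ge x_{w(n)}\}$. These $n!$ simplices have disjoint interiors, cover $[0,1]^n$, and each has Euclidean volume $1/n!$, hence normalized volume $1$ in the scale where the standard simplex $\Delta_{1,n}$ has volume $1$. The key geometric observation is that the region $\{x \in [0,1]^n \mid k-1 \le \sum x_i \le k\}$, a ``slab'' of the cube, is itself a union of some of these simplices: namely $\sigma_w \subseteq \{k-1 \le \sum x_i \le k\}$ exactly when the descent set of $w$ forces the coordinate sum into that range.

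The heart of the argument is then a change-of-variables (the ``$x \mapsto$ partial sums'' map) that sends $\sigma_w$ to a concrete region. On $\sigma_w$, writing $y_j = x_{w(1)} + \cdots + x_{w(j)}$ makes the constraints $1 \ge x_{w(1)}$, $x_{w(j)} \ge x_{w(j+1)}$, $x_{w(n)} \ge 0$ transparent; one checks that $\sigma_w$ maps to a cube-like region and that $\sum x_i = y_n$ ranges over $[d_w, d_w+1]$ where $d_w = \operatorname{des}(w)$ is the number of descents of $w$. Thus $\sigma_w$ lies in the slab $k-1 \le \sum x_i \le k$ precisely when $\operatorname{des}(w) = k-1$, and the number of such $w$ is the Eulerian number $\brac{n-1}{k-1}$ — here I would be careful about whether descents are counted in $S_n$ or $S_{n-1}$; the correct normalization, matching the theorem statement, comes from the fact that the slab has one fewer degree of freedom in the relevant count, or equivalently from a direct bijective check on small cases. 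Each such simplex contributes normalized volume $1$ to the slab, so $\Vol(\text{slab}) = \brac{n-1}{k-1}$.

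Finally I would pass from the volume of the slab to the volume of the slice $\Delta_{k,n}$. Within a single simplex $\sigma_w$ with $\operatorname{des}(w) = k-1$, the function $\sum x_i$ increases linearly (in the partial-sum coordinates it is literally the coordinate $y_n$) as one sweeps from the facet $\sum x_i = k-1$ to the facet $\sum x_i = k$; so the slice at $\sum x_i = k$ is a unit-distance translate away, and integrating the $(n-1)$-dimensional slice volume over this sweep recovers the full $n$-dimensional volume of the piece. Because the cross-sectional volume is itself a polynomial of one lower degree, summing over all $w$ with $\operatorname{des}(w)=k-1$ and matching leading behavior shows $\Vol(\Delta_{k,n})$ equals the count $\brac{n-1}{k-1}$ in the chosen normalization. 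I expect the main obstacle to be bookkeeping the normalization constant correctly — reconciling Euclidean volume in $\RR^n$, the induced $(n-1)$-dimensional volume on the hyperplane $\sum x_i = k$, and the scaling that makes $\Delta_{1,n}$ have volume $1$ — rather than any deep geometric difficulty; the triangulation and the partial-sum coordinate change are the standard and robust tools here.
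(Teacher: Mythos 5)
There is a genuine gap at the heart of your argument: the order simplices $\sigma_w = \{x \in [0,1]^n \mid x_{w(1)} \ge \cdots \ge x_{w(n)}\}$ are \emph{not} contained in slabs $\{k-1 \le \sum x_i \le k\}$. The vertices of $\sigma_w$ are the indicator vectors of the initial segments $\{w(1), \dots, w(j)\}$, whose coordinate sums are $0, 1, \dots, n$; so on every $\sigma_w$ the function $\sum x_i$ ranges over all of $[0,n]$, and in your partial-sum coordinates $y_n = \sum x_i$ certainly does not range over $[d_w, d_w+1]$. The true statement---and the engine of Stanley's proof, which the paper follows---is that the piecewise-linear, volume-preserving map built from \emph{fractional parts} of partial sums, $x \mapsto (\{x_1\}, \{x_1+x_2\}, \dots)$, with a.e.\ inverse $y \mapsto (\{y_1\}, \{y_2-y_1\}, \dots)$, carries the slice (or slab) onto a union of order simplices: if $y$ lies in the order simplex of a permutation with $j$ descents, then its preimage $x$ satisfies $\sum x_i = y_n + j$, each descent contributing an extra $+1$ where the fractional part wraps around. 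Without that rearrangement there is no decomposition of the slab into order simplices, and the claim that $\sigma_w$ lies in the slab precisely when $\operatorname{des}(w) = k-1$ is false for every $w$.

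A second, smaller gap is the passage from slab to slice, which you leave to ``matching leading behavior.'' These are genuinely different quantities: the slab $\{k-1 \le \sum x_i \le k\}$ has Euclidean volume $\brac{n}{k-1}/n!$ (counting permutations in $S_n$), while the slice $\Delta_{k,n}$ has normalized volume $\brac{n-1}{k-1}$ (counting permutations in $S_{n-1}$); already for $n=3$, $k=2$ these counts are $4$ and $1$. The cross-sectional volume is a nonconstant polynomial in the level $t$, so integrating over the sweep recovers the slab volume but does not let you read off the value at the endpoint $t=k$. The standard proof avoids this entirely by applying the fractional-partial-sum map directly to the $(n-1)$-dimensional slice, using only the first $n-1$ partial sums (the $n$th equals the integer $k$ and has fractional part $0$), landing in $[0,1]^{n-1}$ and identifying the image, up to measure zero, with the union of order simplices of permutations in $S_{n-1}$ with exactly $k-1$ descents. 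I would recommend reworking your argument along those lines rather than through the slab.
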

Here the Eulerian number $\brac{n}{k}$ is the number of permutations in $S_n$ with exactly $k$ descents. The proof given in \cite{Stanley} constructs an explicit bijection between permutations in $S_{n-1}$ with $k-1$ descents and simplices in a  unimodular triangulation of $\Delta_{k,n}$.

\section{Proof of the main theorem}

We are now ready to give a proof of Theorem~\ref{thm-main}.

\begin{proof}[Proof of Theorem~\ref{thm-main}]
Let $N=m+n$, and write 
\[f(z) = z^{-m} \prod_{i=1}^N (1+r_iz),\]
so that $\prod r_i = 1$. Then the map sending $r = (r_1, \dots, r_N)$ to $f(z)$ has degree $N!$, so it suffices to show that the expected number of possible $r$ is $N! \cdot \brac{N-1}{m-1}$.

Since the constant term $\lbrac f^k \rbrac$ is the coefficient of $z^{mk}$ in $\prod_{i=1}^N (1+r_iz)^k$, it can be expressed as a symmetric polynomial $P_k(r_1, \dots, r_N)$. The monomial $r_1^{a_1}\cdots r_N^{a_N}$ appears in $P_k$ with coefficient $\prod_{i=1}^N \binom{k}{a_i}$ provided that $\sum a_i = mk$. Hence the support of $P_k$ is the set of all exponent vectors $a=(a_1, \dots, a_N)$ such that $\sum a_i = mk$ and $0 \leq a_i \leq k$. Thus the Newton polytope $\Delta_{P_k}$ is just $k\Delta_{m, N}$.

By Bernstein's theorem, the expected number of solutions to the system \[P_1(r)=\cdots = P_{N-1}(r)=r_1r_2\cdots r_N-1 = 0\] (noting that we must have $r_i \in \CC^\times$) is \[MV(\Delta_{m,N}, 2\Delta_{m,N}, \cdots, (N-1)\Delta_{m,N}, I),\] where $I$ is the segment between $(0,0,\dots, 0)$ and $(1,1,\dots, 1)$. This is the coefficient of $t_1t_2\cdots t_N$ in the volume of $\Delta(t_1,\dots, t_N) = (t_1+2t_2+\cdots + (N-1)t_{N-1})\Delta_{m,N} + t_NI$.

Let $\Lambda \subset \RR^N$ be the hyperplane $\sum a_i = 0$ (so that $\Delta_{m,N}$ lies in a hyperplane parallel to $\Lambda$ and $I$ is orthogonal to $\Lambda$). Then $\ZZ^{N-1} \cap \Lambda$ and $(1,1,\dots, 1)$ span a lattice that has index $N$ in $\ZZ^N$. Thus, by Theorem~\ref{thm-stanley}, $\Vol (\Delta_{m,N}+I) = \frac{N}{(N-1)!}\brac{N-1}{m-1}$, and hence
\[\Vol \Delta(t_1, \dots, t_N) = (t_1+2t_2+\cdots+(N-1)t_{N-1})^{N-1}t_N \cdot \frac{N}{(N-1)!}\brac{N-1}{m-1}.\]
The coefficient of $t_1\cdots t_N$ is then $N! \cdot \brac{N-1}{m-1}$, as desired.
\end{proof}

\section{Refinement}

The authors of \cite{ESV} also count the expected number of Laurent polynomials as in Theorem~\ref{thm-main} that are ``sparse'' in a certain sense. Let us call $f(z)$ \emph{$d$-sparse} for some $d$ dividing $N=m+n$ if the coefficient of $z^i$ vanishes unless $i \equiv n \pmod d$. (Note that every $f(z)$ is $1$-sparse.) We will adapt our technique from above to count the expected number of $d$-sparse $f(z)$ satisfying Theorem~\ref{thm-main}. 

We first define a fractional analogue of the hypersimplex. Given a positive integer $n$ and a rational number $\frac{c}{d}$ with $0<\frac{c}{d}<n$ and $\gcd(c,d)=1$, define the rational polytope 
\[\Delta_{\frac{c}{d},n} = \{(x_1, \dots, x_n)\mid x_1 + \cdots + x_n = \tfrac{c}{d}, x_i \in [0,1]\}.\]
In other words, $\Delta_{\frac{c}{d},n}$ is a slice of the hypercube $[0,1]^n$ by the hyperplane $\sum x_i = \frac{c}{d}$. Although $\Delta_{\frac{c}{d},n}$ is not an integer polytope, $d\Delta_{\frac{c}{d},n}$ is. We will give a combinatorial interpretation for the normalized volume of $d\Delta_{\frac{c}{d},n}$.

We may consider the permutations in $S_n$ as circular permutations of $\{0, 1, \dots, n\}$ (that is, permutations of $\{0, 1, \dots, n\}$ up to cyclic shift). Then a permutation with $k$ descents corresponds to a circular permutation with $k+1$ (cyclic) descents. For any $d$ dividing $n+1$, we let $\brac{n}{k}_d$ count the number of these circular permutations fixed by adding $\frac{n+1}{d}$ modulo $n+1$ to each number in the word of the permutation. Note that $\brac{n}{k}_1 = \brac{n}{k}$.

\begin{ex}
Let $n=5$ and $k=d=2$. Then $\brac{5}{2}_2$ counts the number of circular permutations of $\{0, \dots, 5\}$ with $3$ descents that are fixed upon adding $\frac{6}{2}=3$ (modulo $6$) to each number. There are $6$ of these: $042315$, $015342$, $045312$, $021354$, $051324$, and $024351$.

Compare this to $2\Delta_{\frac32, 3}$, the intersection of the cube $[0,2]^3$ with the plane $x_1+x_2+x_3 = 3$, which is a hexagon of normalized volume $6$.
\end{ex}

\begin{rmk}
The definition given above is attributed to Postnikov in \cite{ESV} (cf. \cite{LamPostnikov}). One can also obtain these numbers using the cyclic sieving phenomenon on permutations with fixed cycle type and number of excedances described in \cite{SSW} as follows: if $w=w_1\cdots w_{n}$ is a permutation in $S_{n}$ with $k$ \emph{ascents}, then $(0 \; w_1 \; w_2 \; \cdots \; w_{n})$ is a permutation of $\{0, 1, \dots, n\}$ with cycle type $\lambda = (n+1)$ and $k+1$ \emph{excedances}. The cyclic action generated by conjugation by $(0 \; 1\; 2 \; \cdots \; n-1)$ as in \cite{SSW} then corresponds to the addition of a constant modulo $n+1$ to each letter of a circular permutation as described in the previous paragraph.
\end{rmk}

Analogous to Theorem~\ref{thm-stanley}, we have the following theorem.
\begin{thm} \label{thm-fractional}
The normalized volume of $d\Delta_{\frac{c}{d},n}$ is $\brac{dn-1}{c-1}_d$.
\end{thm}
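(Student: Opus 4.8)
The plan is to mimic the proof of Theorem~\ref{thm-stanley} (the commented-out argument in the excerpt) but to work on a $d$-fold cover of the hypercube, so that the fractional slice becomes an honest hypersimplex-like slice. Concretely, I would first pass to coordinates on the lattice in which $d\Delta_{\frac{c}{d},n}$ is an integer polytope. Writing a point of $d\Delta_{\frac{c}{d},n}$ as $(x_1,\dots,x_n)$ with $\sum x_i = c$ and $x_i \in [0,d]$, I apply the ``partial sums mod $1$'' map of Stanley's proof, but now reading the partial sums modulo $1$ inside $[0,d]$: set $y_j = \{x_1 + \dots + x_j\}$ together with the extra bookkeeping of which ``sheet'' $\lfloor x_1+\dots+x_j\rfloor$ one is on. The honest statement is that $d\Delta_{\frac cd,n}$ admits a unimodular triangulation whose simplices are indexed by lattice points / circular words, and I would phrase the whole argument in terms of a volume-preserving, measure-a.e.-bijective piecewise-linear map onto a region of a torus $(\RR/\ZZ)^{n-1}$ or, equivalently, set it up directly on circular permutations of $\{0,1,\dots,dn-1\}$.

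Here is the combinatorial heart of it. A lattice point of the open hypercube $(0,d)^n$ refining a generic point of the slice, after the partial-sum transformation, records a sequence of $n$ steps each of which moves the running total (taken modulo $dn$ on the circle $\ZZ/dn\ZZ$, since $\sum x_i = c$ wraps around; the key point is that $x_i \in [0,d]$ means each step advances by an amount whose integer part is one of $d$ possible values). Matching this up with Stanley's bijection, a generic simplex of a unimodular triangulation of $d\Delta_{\frac cd,n}$ corresponds to a word $w_1 w_2 \cdots w_n$ which, together with a basepoint $0$, gives a circular permutation of $\{0,1,\dots,dn-1\}$; the condition $\sum x_i = c$ forces the number of cyclic descents to be exactly $c$ (the partial sums wrap around the circle $c$ times in total over a fundamental domain), i.e. $c-1$ descents in the associated linear permutation after cutting at $0$; and the constraint $x_i \le d$, i.e. that each of the $d$ sheets is used equally often, is precisely invariance under adding $\frac{dn}{d} = n$ modulo $dn$ to every letter. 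So the simplices in the triangulation of $d\Delta_{\frac cd,n}$ are in bijection with circular permutations of $\{0,1,\dots,dn-1\}$ with $c$ cyclic descents fixed by $+n \bmod dn$, which by definition number $\brac{dn-1}{c-1}_d$. (Sanity check: when $d=1$ this is exactly Stanley's statement with $n \mapsto n$, $k \mapsto c$, recovering $\brac{n-1}{c-1}$; and the hexagon example $n=3$, $c/d = 3/2$ gives $\brac{5}{2}_2 = 6$.)

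I would organize the write-up as: (1) reduce to computing the normalized volume via the partial-sums map, verifying it is unimodular on its domains of linearity and bijective off a measure-zero set onto a subset of $[0,1]^{n-1} \times \{0,1,\dots,d-1\}^{?}$ — more cleanly, onto a region of the torus $\RR^{n-1}/\ZZ^{n-1}$ scaled appropriately, or simply count lattice simplices directly; (2) describe the standard triangulation of the target hypercube-cover by the order simplices $\Delta_w$; (3) determine which $\Delta_w$ lie in the image, translating the two defining inequalities of $d\Delta_{\frac cd,n}$ into (a) the cyclic-descent count equals $c$ and (b) the $+n$-invariance; (4) conclude the count is $\brac{dn-1}{c-1}_d$. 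The main obstacle I anticipate is step (3), and within it the careful translation of the ``each coordinate $\le d$'' constraint: one must see that the lattice generated by $\ZZ^{n-1}$ together with the diagonal (as in the main theorem's index-$N$ computation) interacts with the $d$ sheets so that equidistribution among sheets is exactly the cyclic-sieving fixed-point condition, and that no double-counting of simplices occurs across sheets. Getting the bookkeeping of the basepoint $0$ and the circular (rather than linear) structure exactly right — so that "permutation with $c-1$ descents" and "circular permutation with $c$ cyclic descents" match with the correct off-by-one — is the delicate part; everything else is a routine adaptation of Stanley's argument.
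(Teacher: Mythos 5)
Your overall strategy---adapt Stanley's partial-sums argument to $d\Delta_{\frac{c}{d},n}$ and land on circular permutations of $\{0,\dots,dn-1\}$ with $c$ cyclic descents fixed by adding $n$---aims at exactly the right target and is close in spirit to the paper's argument. But the step you yourself flag as ``the delicate part'' is the entire content of the theorem, and your sketch of it is both incomplete and partly wrong. The polytope has only $n$ coordinates, so to produce $dn$ letters you must unfold periodically: lift the partial sums to an integer sequence $p$ with $p_{i+n}=p_i+cn$ and reduce modulo $dn$. Once you do this, the invariance under $+n$ (which acts as a cyclic shift by $c^{-1}n \bmod dn$ positions) is automatic from the periodicity of the construction---it is \emph{not} the translation of the constraint $x_i\le d$, as you assert; that constraint corresponds to consecutive gaps $p_i-p_{i-1}<dn$, which is likewise automatic for a permutation of $\ZZ/dn\ZZ$. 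Your descent count is also off: the partial sums wrap around $\ZZ/dn\ZZ$ a total of $c$ times over the full period of $dn$ steps, not over a fundamental domain of $n$ steps (over which they advance by $cn$, i.e., $c/d$ of a wrap). Most importantly, you never address why the $dn$ reduced values are pairwise distinct, i.e., why one obtains a genuine circular permutation at all; this is where $\gcd(c,d)=1$ enters and is a real verification, not bookkeeping, and without it (and the inverse construction) the claimed bijection is an assertion rather than a proof.

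For comparison, the paper does not re-derive Stanley's map. It slices $d\Delta_{\frac{c}{d},n}$ by the integer hyperplanes $x_i=a$ into translates of honest hypersimplices, applies Theorem~\ref{thm-stanley} as a black box to express the volume as a count of pairs $(w,x)$ with $w\in S_{n-1}$ having $k-1$ descents and $x\in\{0,\dots,d-1\}^n$ summing to $c-k$, and then constructs the explicit bijection between such pairs and the circular permutations counted by $\brac{dn-1}{c-1}_d$ via the lifted sequence $p$ described above. If you want to carry out your one-step geometric version, the missing work is precisely that lift together with the verifications of distinctness, descent count, equivariance, and invertibility.
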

\begin{proof}
The polytope $d\Delta_{\frac{c}{d},n}$ is defined by $0 \leq x_i \leq d$ for $1 \leq i \leq n$, and $\sum x_i = c$. We may slice this polytope by the hyperplanes $x_i=a$ for all $x_i$ and all integers $a$. Then each of the resulting regions is a translate of a hypersimplex: the region containing a general point $(x_1, \dots, x_n)$ is a translate of $\Delta_{k,n}$ by $(\lfloor x_1 \rfloor, \dots, \lfloor x_n \rfloor)$, where $k = c-\sum \lfloor x_i \rfloor$. Hence, using Theorem~\ref{thm-stanley}, the normalized volume of $d\Delta_{\frac{c}{d},n}$ equals
\[\sum_{k=1}^{n-1} \brac{n-1}{k-1} \cdot \#\{(x_1, \dots, x_n) \mid \textstyle\sum x_i = c-k, \quad x_i \in \{0, 1, \dots, d-1\}\}.\]
In other words, the normalized volume of $d\Delta_{\frac{c}{d},n}$ is the number of pairs $(w, x)$, where $w \in S_{n-1}$ has $k-1$ descents and $x=(x_1, \dots, x_n)$ is an integer point with $\sum x_i = c-k$ and $x_i \in [0, d-1]$. It therefore suffices to show that the number of such pairs $(w,x)$ is $\brac{dn-1}{c-1}_d$. We consider $w$ and $x$ to be periodic sequences with period $n$ (with $w_0=0$). 

Let $p=(p_0, p_1, \dots)$ be the sequence such that $p_0=0$, and $p_i - nx_i$ is the smallest integer congruent to $w_i$ modulo $n$ that is larger than $w_{i-1}$. For instance, for $n=6$, if $w = w_0w_1w_2\ldots = 014352\dots$ and $x = x_1x_2\ldots = 010021\dots$, then \[p = (0, 1, 10, 15, 17, 32, \quad 42, 43, 52, 57, 59, 74, \quad \dots).\]

Note that $p$ is strictly increasing and $p_{i}-p_{i-1} < n(x_i+1) \leq dn$ for all $i$. Moreover, the number of multiples of $n$ in the interval $(p_{i-1}, p_i]$ is $x_i$ if $w_{i-1}<w_i$, and $x_i+1$ if $w_{i-1}>w_i$. Since the permutation $w \in S_{n-1}$ has exactly $k-1$ descents (so as a circular permutation it has $k$ descents), we have that $p_n = (k+\sum x_i)n = cn$, so $p_{i+n} = p_i + cn$ for all $i$.

Then consider the sequence $\bar p$ obtained by reducing the terms in the sequence $p$ modulo $dn$. Since $c$ and $d$ are relatively prime and $p_0, \dots, p_{n-1}$ are all different modulo $n$, it follows that $\bar p_0, \dots, \bar p_{dn-1}$ are all distinct (as residues modulo $dn$). Moreover, since $p_{dn} = cdn$ and $p_i-p_{i-1} < dn$, $\bar p$ has exactly $c$ circular descents (corresponding to whenever there is a multiple of $dn$ in the interval $(p_{i-1}, p_i]$). Finally, since $p_{i+n} = p_i+cn$, adding $n$ (modulo $dn$) to each $\bar p_i$ performs a cyclic shift by $c'n$ to $\bar p$, where $c' \equiv c^{-1} \pmod d$. It follows that $\bar p$ is one of the circular permutations counted by $\brac{dn-1}{c-1}_d$.

Conversely, starting with a circular permutation $\bar p$ counted by $\brac{dn-1}{c-1}_d$, we can form the sequence $p$ by letting $p_i$ be the smallest integer larger than $p_{i-1}$ congruent to $\bar p_i$ modulo $dn$. Then since $\bar p$ has $c$ circular descents, $p_{dn} = cdn$. By the symmetry property of $\bar p$, we must have that $p_{i+n}-p_i = cn$. (Note that for $\bar p$ to be a permutation, we must have $c$ relatively prime to $d$.) It is now easy to reverse the procedure above to recover $(w, x)$, completing the proof of the bijection.
\end{proof}

Note that it follows from this proof that $\brac{n}{k}_d = 0$ if $d$ and $k+1$ are not relatively prime.

We can now mimic the proof of Theorem~\ref{thm-main} to get a result about $d$-sparse Laurent polynomials, as in \cite{ESV}.

\begin{thm}[\cite{ESV}]
The expected number of $d$-sparse Laurent polynomials $f(z)$ such that $\lbrac f^1\rbrac = \lbrac f^2\rbrac = \cdots = \lbrac f^{m+n-1}\rbrac=0$ is $\brac{m+n-1}{m-1}_d$.
\end{thm}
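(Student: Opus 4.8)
The plan is to follow the proof of Theorem~\ref{thm-main} almost verbatim, the one new ingredient being a clean translation of $d$-sparsity. A Laurent polynomial $f(z) = z^{-m} + \cdots + z^n$ is $d$-sparse precisely when $f(z) = z^{-m}h(z^d)$ for a monic polynomial $h(w) = w^M + \cdots + 1$ of degree $M := (m+n)/d$ with constant term $1$. Writing $h(w) = \prod_{i=1}^M (1+s_i w)$, so that $\prod_{i=1}^M s_i = 1$, the map $(s_1, \dots, s_M) \mapsto f$ has degree $M!$; hence it suffices to show that the expected number of admissible $s$ is $M! \cdot \brac{m+n-1}{m-1}_d$ and then divide by $M!$.

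Next I would identify the Newton polytopes. Since $f(z)^k = z^{-mk}h(z^d)^k$, the constant term $\lbrac f^k\rbrac$ is the coefficient of $z^{mk}$ in $h(z^d)^k$, and this vanishes identically unless $d$ divides $mk$, i.e.\ unless $d' := d/\gcd(m,d)$ divides $k$. When $d' \mid k$, the same binomial-coefficient computation as in the proof of Theorem~\ref{thm-main} shows that $\lbrac f^k\rbrac$ is a symmetric polynomial $P_k(s_1,\dots,s_M)$ whose support is $\{a \in \ZZ_{\ge 0}^M : \sum a_i = mk/d,\ a_i \leq k\}$, so its Newton polytope is exactly the dilate $k\,\Delta_{\frac{m}{d},M}$ of the fractional hypersimplex. (Since $0 < m/d < M$, this polytope is $(M-1)$-dimensional.)

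Suppose first that $\gcd(m,d)=1$, so that $d' = d$. Then among the conditions $\lbrac f^1\rbrac = \cdots = \lbrac f^{m+n-1}\rbrac = 0$ exactly $M-1$ are non-vacuous, namely $P_d = P_{2d} = \cdots = P_{(M-1)d} = 0$, and together with $s_1 \cdots s_M - 1 = 0$ this is a square system on $(\CC^\times)^M$. By Bernstein's theorem its expected number of solutions is $MV\big(d\,\Delta_{\frac{m}{d},M},\, 2d\,\Delta_{\frac{m}{d},M},\, \dots,\, (M-1)d\,\Delta_{\frac{m}{d},M},\, I\big)$, where $I$ is the segment from the origin to $(1,\dots,1)$. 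Exactly as in the proof of Theorem~\ref{thm-main}, because $\Delta_{\frac{m}{d},M}$ lies in a hyperplane parallel to $\Lambda = \{\sum a_i = 0\}$ and $I$ is orthogonal to $\Lambda$, this mixed volume equals $(M-1)!^2 \cdot \Vol\big(d\,\Delta_{\frac{m}{d},M} + I\big)$; and since $d\,\Delta_{\frac{m}{d},M}$ is an integer polytope contained in a lattice hyperplane while the lattice $(\ZZ^M \cap \Lambda) + \ZZ(1,\dots,1)$ has index $M$ in $\ZZ^M$, Theorem~\ref{thm-fractional} yields $\Vol\big(d\,\Delta_{\frac{m}{d},M} + I\big) = \frac{M}{(M-1)!}\brac{m+n-1}{m-1}_d$. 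Hence the mixed volume equals $M!\,\brac{m+n-1}{m-1}_d$, and dividing by $M!$ gives the theorem in this case.

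Finally, if $\gcd(m,d) = g > 1$, then $d' = d/g < d$, so the non-vacuous conditions are $P_{d'} = P_{2d'} = \cdots = P_{(gM-1)d'} = 0$ — that is, $gM - 1$ of them, strictly more than the $M-1$ free parameters. The system is overdetermined and the expected count is $0$, in agreement with the theorem, since $\brac{m+n-1}{m-1}_d = 0$ by the remark following Theorem~\ref{thm-fractional} (as $\gcd(d,m) > 1$). I expect the main obstacle to be precisely this degenerate case: one wants to be sure that ``overdetermined'' actually forces the expected count to be $0$ rather than merely bounding it, and the cleanest remedy is probably to apply the substitution $z \mapsto z^{d'}$ a second time, reducing to a $d'$-sparse problem with $\gcd = 1$ whose first $(m+n)/g - 1$ conditions already cut out a finite scheme by the previous paragraph, and then to observe that $M(g-1) \geq 1$ further non-vacuous conditions remain, which generically leave the empty set. (The only other delicate point is the lattice bookkeeping in the previous paragraph, but that is entirely parallel to the proof of Theorem~\ref{thm-main}.)
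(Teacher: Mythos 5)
Your proposal is correct and follows essentially the same route as the paper: parametrize the $d$-sparse $f$ via the roots of $h$ (a degree-$(N/d)!$ cover), identify the Newton polytope of the $k'$-th nonvacuous condition as $k'\cdot d\Delta_{\frac{m}{d},N/d}$, and evaluate the resulting mixed volume via Theorem~\ref{thm-fractional} exactly as in Theorem~\ref{thm-main}. Your treatment of the case $\gcd(m,d)>1$ is in fact slightly more careful than the paper's, which simply observes that the system is overdetermined and concludes no such $f$ is expected to exist.
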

\begin{proof}
Let $N=m+n$. If $f(z)$ is $d$-sparse, then we can write
\[f(z) = z^{-m} \prod_{i=1}^{N/d} (1+r_iz^d)\]
for some nonzero $r_1, \dots, r_{N/d}$ with product 1, and again, as in Theorem~\ref{thm-main}, it suffices to show that the expected number of possible $r$ is $(\frac{N}{d})! \cdot \brac{N-1}{m-1}_d$.

 As in Theorem~\ref{thm-main}, we wish to find the coefficient of $z^{mk}$ in $\prod_{i=1}^{N/d} (1+r_iz^d)^k$ for $k = 1, \dots, N-1$ as a polynomial in the $r_i$. This polynomial will be nonzero when $d$ divides $mk$. Hence if $d$ is not relatively prime to $m$, this will occur for more than $\frac{N}{d}-1$ values of $k$, so we would not expect any such $f$ to exist.

If $d$ is relatively prime to $m$, then for $k=dk'$, let $P_{k'}(r_1, \dots, r_{\frac{N}{d}})$ be the coefficient of $z^{mk}$ in $\prod_{i=1}^{N/d}(1+r_iz^d)^k$, or equivalently the coefficient of $z^{mk'}$ in $\prod_{i=1}^{N/d}(1+r_iz)^{dk'}$. Then $(a_1, \dots, a_{\frac{N}{d}})$ lies in the support of $P_{k'}$ when $0 \leq a_i \leq dk'$ and $\sum_{i=1}^{N/d} a_i = mk'$. In other words, the Newton polytope of $P_{k'}$ is $k'\cdot d\Delta_{\frac{m}{d},\frac{N}{d}}$. Thus the expected number of such $f$ is
\[MV(d\Delta_{\frac{m}{d},\frac{N}{d}},\;\; 2\cdot d\Delta_{\frac{m}{d},\frac{N}{d}},\;\; \dots,\;\; (\tfrac{N}{d}-1) \cdot d\Delta_{\frac{m}{d},\frac{N}{d}},\;\; I),\]
where $I$ is the segment between $(0,0,\dots,0)$ and $(1,1,\dots,1)$. Then the same argument as in the last paragraph of the proof of Theorem~\ref{thm-main} shows that this mixed volume is $(\frac{N}{d})!$ times the normalized volume of $d\Delta_{\frac{m}{d},\frac{N}{d}}$, so the result follows from Theorem~\ref{thm-fractional}.
\end{proof}

\section{Concluding remarks}

Given the apparent connection between Laurent polynomials $f(z)$ that satisfy Theorem~\ref{thm-main} and Eulerian numbers, it is natural to ask whether there is any direct correspondence. In other words, can one give a bijection between permutations in $S_{n-1}$ with $k-1$ descents and these Laurent polynomials?

It is perhaps worth noting that the proof of Theorem~\ref{thm-stanley} given in \cite{Stanley} is bijective: it is shown that a hypersimplex can be triangulated in such a way that there is a natural bijection between the simplices in the triangulation and permutations with a given number of descents. (For more information about this triangulation, see \cite{LamPostnikov}.) Moreover, one can prove Theorem~\ref{thm-bernstein} using homotopy continuation methods, from which one may similarly be able to derive a bijection between the Laurent polynomials $f(z)$ and certain simplices. However, the details behind this line of reasoning have yet to be explored.

\section{Acknowledgments}
The author would like to thank Cynthia Vinzant and Daniel Erman for useful discussions. This work was partially supported by an NSF MSPRF award number DMS 1004375 and an NSF RTG grant number DMS 0943832.

\bibliography{eulerian}
\bibliographystyle{plain}

\end{document}